\numberwithin{equation}{section}
\newtheorem{theorem}{Theorem}[section]
\newtheorem{lemma}{Lemma}[section]
\begin{document}

\title{A fast Newton-Shamanskii iteration for M/G/1-type and GI/M/1-type Markov chains
\thanks{}}
\author{Pei-Chang Guo \thanks{e-mail: gpeichang@126.com} \\
School of Mathematical Sciences, Peking University, Beijing 100871, China}
\date{}
\maketitle
\begin{abstract}
For the nonlinear matrix equations arising in the analysis of M/G/1-type and GI/M/1-type Markov chains, the minimal nonnegative solution $G$ or $R$ can be found by Newton-like methods. Recently a fast Newton's iteration is proposed in \cite{Houdt2}. We apply the Newton-Shamanskii iteration to the equations. Starting with zero initial guess or some other suitable initial guess, the Newton-Shamanskii iteration provides a  monotonically increasing sequence of nonnegative matrices converging to the minimal nonnegative solution. We use the technique in \cite{houdt2} to accelerate the Newton-Shamanskii iteration. Numerical examples illustrate the effectiveness of the Newton-Shamanskii iteration.

\vspace{2mm} \noindent \textbf{Keywords}: Markov chains, Newton-Shamanskii iteration, Minimal nonnegative solution.
\end{abstract}
\section{Introduction}
Some necessary notation for this article is as follows. For any matrix $B=[b_{ij}]\in \mathbb{R}^{n\times n}$, $B\geq 0 ~(B>0)$ if $b_{ij} \geq 0 ~(b_{ij}> 0)$ for all $i,j$; for any matrices $A,B \in \mathbb{R}^{n\times n}$, $A\geq B~ (A > B)$ if $a_{ij}\geq b_{ij}~(a_{ij} > b_{ij})$ for all $i, j$; the vector with all entries one is denoted by e --- i.e. $e=(1, 1, \cdots, 1)^T$; and the identity matrix is denoted by $I$.
An M/G/1-type Markov Chain (MC) is defined by a transition probability matrix of the form
  \begin{eqnarray*}
P=\left[\begin{array}{ccccc}
B_0 &B_1&B_2&B_3& \cdots \\
C &A_1&A_2&A_3& \cdots \\
 &A_0&A_1&A_2& \ddots \\
 & &A_0&A_1& \ddots \\
 0&& &\ddots& \ddots \\
\end{array}\right],
\end{eqnarray*}
while the transition probability matrix of a GI/M/1-type MC is as follows
 \begin{eqnarray*}
P=\left[\begin{array}{ccccc}
B_0 &C& & & 0 \\
B_1 &A_1&A_0& &   \\
B_2 &A_2&A_1&A_0&  \\
B_3 & A_3 &A_2&A_1& \ddots \\
 \vdots&\vdots& \ddots&\ddots& \ddots \\
\end{array}\right],
\end{eqnarray*}
where $B_0\in \mathbb{R}^{m_0\times m_0}$ and $A_1\in \mathbb{R}^{m\times m}$, respectively. $N$ is the smallest index $i$ such that $A_i$, for $i>N$, is (numerically) zero. The steady state probability vector of an M/G/1-type MC, if it exists, can be expressed in terms of a matrix $G$ that is the element-wise minimal nonnegative solution to the nonlinear matrix equation \cite{Neut2}
\begin{equation}\label{mcequation}
G=\sum_{i=0}^N A_iG^i.
\end{equation}
Similarly, for the GI/M/1-type MC a matrix $R$ is of practical interest, which is the element-wise minimal nonnegative solution to the nonlinear matrix equation \cite{Neut3}
\begin{equation}\label{mcequation2}
R=\sum_{i=0}^N R^iA_i.
\end{equation}
It's known that any M/G/1-type MC can be transformed into a GI/M/1-
type MC and vice versa through either the Ramaswami \cite{Ramas2} or Bright \cite{Houdt1}
dual, and the $G (R)$ matrix can be obtained directly in terms of the $R
(G)$ matrix of the dual chain. The drift of the chain is defined by
\begin{equation}\label{mcrho}
    \rho= p^T\beta,
\end{equation}
where $p$ is the stationary probability
vector of the irreducible stochastic matrix $A=\sum_{i=0}^N A_i$, $\beta=\sum_{i=1}^N iA_ie$.  The MC is positive recurrent if $\rho <1$, null recurrent if $\rho =1$ and transient if $\rho > 1$ --- and throughout this article it is assumed that $\rho \neq1$.

Available algorithms for finding the minimal nonnegative solution to Eq. \eqref{mcequation} include functional iterations \cite{Neut2}, pointwise cyclic reduction (CR) \cite{bini1}, the invariant subspace approach (IS) \cite{Akar}, the Ramaswami reduction (RR) \cite{bini2},and the Newton iteration (NI) \cite{Latou,Neut1,Ramas1,Houdt2}. For the detailed comparison of these algorithms, we refer the readers to \cite{Houdt2} and the references therein. Recently, a fast Newton's iteration is proposed in \cite{Houdt2} and results in substantial improvement on CPU time compared with its predecessors. From numerical experience, the fast Newton's iteration is a very competitive algorithm.

In this paper, the Newton-Shamanskii iteration is applied to the Eq. \eqref{mcequation}.
Starting with a suitable initial guess, the sequence generated by the Newton-Shamanskii iteration is monotonically increasing and converges to the minimal nonnegative solution of Eq. \eqref{mcequation}. Similar with Newton's iteration, equation involved in the Newton-Shamanskii iteration step is also a linear equation of the form $\sum_{j=0}^{N-1} B_jXC^j=E$, which can be solved by a Schur-decomposition method. The Newton-Shamanskii iteration differs from Newton's iteration as the Fr\'{e}chet derivative is not updated at each iteration, therefore the special coefficient matrix structure form can be reused.

The paper is organized as follows. The Newton-Shamanskii iteration and its accelerated iterative procedure using a Schur-decomposition method are given in Section 2. Then M/G/1-type MCs with low-rank downward transitions and low-rank local and upward transitions are considered in  Section 3 and  Section 4, respectively. Numerical results in Section 5 show that the fast Newton-Shamanskii iteration can be more efficient than the fast Newton's iteration proposed in \cite{Houdt2}.  Final conclusions are presented in Section 6.

\section{Newton-Shamanskii Iteration}
In this section we present the Newton-Shamanskii iteration for the Eq. \eqref{mcequation}.
First we rewrite \eqref{mcequation} as
\begin{equation}\label{mcnme}
    \mathcal{G}(X)=\sum_{v=0}^N A_vX^v-X=0
\end{equation}
The function $\mathcal{G}$ is a mapping from $\mathbb{R}^{m\times m}$ into
itself and the Fr\'{e}chet derivative of $\mathcal{G}$ at $X$ is a linear map $\mathcal{G}^{'}_X : \mathbb{R}^{m\times m}\rightarrow \mathbb{R}^{m\times m}$ given by
\begin{equation}\label{mcfdao}
   \mathcal{G}^{'}_X(Z)= \sum_{v=1}^N\sum_{j=0}^{v-1} A_vX^jZX^{v-1-j}-Z.
\end{equation}
The second derivative at $X$, $\mathcal{G}^{''}_X : \mathbb{R}^{m\times m}\rightarrow \mathbb{R}^{m\times m}$, is given by
\begin{equation}\label{mc2fdao}
    \mathcal{G}^{''}_X(Z_1,Z_2)=\sum_{v=2}^N\sum_{j=0}^{v-1} A_v(\sum_{i=0}^{j-1}X^iZ_2X^{j-1-i})Z_1X^{v-1-j}+\sum_{v=2}^N\sum_{j=0}^{v-2}A_vX^jZ_1(\sum_{i=0}^{v-2-j}X^iZ_2X^{v-2-j-i}).
\end{equation}

For a given initial guess $G_{0,0}$, the Newton-Shamanskii iteration for the solution of $\mathcal{G}(x) = 0$ is as follows:

 for $k=0,1,\cdots$
\begin{eqnarray}
\label{mclikea}   \mathcal{G}^{'}_{G_{k,0}}X_{k,s-1}&=&-\mathcal{G}({G_{k,s-1}}),\quad G_{k,s} = G_{k,s-1}+X_{k,s-1}, \quad s=1,2,\cdots , n_k,\\
\label{mclikeb}  G_{k+1}&=&G_{k+1,0}= G_{k,n_k}\;.
\end{eqnarray}
$X_{k,s-1}$ is the solution to
\begin{equation*}
    X_{k,s-1}-\sum_{v=1}^N\sum_{j=0}^{v-1} A_vG_k^jX_{k,s-1}G_k^{v-1-j}=\sum_{v=0}^N A_vG_{k,s-1}^v-G_{k,s-1},
\end{equation*}
which, after rearranging the terms, can be rewritten as
\begin{equation}\label{mcit1}
     X_{k,s-1}-\sum_{j=0}^{N-1}\sum_{v=j+1}^{N} A_vG_k^{v-1-j}X_{k,s-1}G_k^{j}=\sum_{v=0}^N A_vG_{k,s-1}^v-G_{k,s-1}.
\end{equation}
Following the notation of \cite{Houdt2}, we define $S_{k,i}=\sum_{j=i}^NA_jG_k^{j-i}$, then the above equation is
\begin{equation}\label{mcit2}
    (S_{k,1}-I)X_{k,s-1}+\sum_{j=1}^{N-1}S_{k,j+1}X_{k,s-1}G_k^j=G_{k,s-1}-\sum_{v=0}^NA_vG_{k,s-1}^v,
\end{equation}
which is a linear equation of the same form $\sum_{j=0}^{N-1} B_jXC^j=E$ as the Newton's iteration step. It can be solved fast by applying a Schur decomposition on the matrix $C$, which is the $m\times m$ matrix $G_k$ here, and then solving $m$ linear systems with $m$ unknowns and equations. For the detailed description for solving $\sum_{j=0}^{N-1} B_jXC^j=E$, we refer the reader to \cite{Houdt3,Houdt2}. We stress that for Newton-Shamanskii iteration, the coefficient matrices are updated once after every $n_k$ iteration steps and the special coefficient structure can be reused, so the cost per iteration step is reduced significantly.
\section{The Case of Low-Rank Downward Transitions}
When the matrix $A_0$ is of rank $r$, meaning it can be decomposed as $A_0=\widehat{A}_0\Gamma$ with $\widehat{A}_0\in \mathbb{R}^{m\times r}$ and $\Gamma\in \mathbb{R}^{r\times m}$, we refer to the MC as having low-rank downward transitions. If Newton-Shamanskii iteration is applied to this case, all the matrices $X_{k,s-1}$ can be written as $\widehat{X}_{k,s-1}\Gamma$. This can be shown by make induction on the index $s$. $X_{0,0}$ can be written as $\widehat{X}_{0,0}\Gamma$ and we assume that it is true for all $X_{l,j-1}$ for $l=0,\ldots,k$ and $j=1,\ldots,s-1$. Hence $ G_{k,s-1}$ can be written as $ \widehat{G}_{k,s-1}\Gamma$, since $G_{k,s-1}=\sum_{l=0}^{k-1}\sum_{j=1}^{n_l}X_{l,j-1}+\sum_{j=1}^{s-1}X_{k,j-1}
=(\sum_{l=0}^{k-1}\sum_{j=1}^{n_l}\widehat{X}_{l,j-1}+\sum_{j=1}^{s-1}\widehat{X}_{k,j-1})\Gamma$. Then  \eqref{mcit1} can be rewritten  as
\begin{eqnarray*}
\nonumber  X_{k,s-1} &=&\widehat{A}_0\Gamma+ \sum_{j=1}^N A_jG_{k,s-1}^{j-1}\widehat{G}_{k,s-1}\Gamma-\widehat{G}_{k,s-1}\Gamma+\sum_{v=1}^N A_v G_k^{v-1}X_{k,s-1}\\
\nonumber && +\sum_{j=1}^{N-1}\sum_{v=j+1}^{N} A_vG_k^{v-1-j}X_{k,s-1}G_k^{j-1}\widehat{G}_{k}\Gamma,\\
\nonumber   &=&(I-\sum_{v=1}^N A_v G_k^{v-1})^{-1}\\
\nonumber   &&\times (\widehat{A}_0+\sum_{j=1}^N A_jG_{k,s-1}^{j-1}\widehat{G}_{k,s-1}-\widehat{G}_{k,s-1}\sum_{j=1}^{N-1}\sum_{v=j+1}^{N} A_vG_k^{v-1-j}X_{k,s-1}G_k^{j-1}\widehat{G}_{k})\Gamma,
\end{eqnarray*}
therefore $X_{k,s-1}$ can be decomposed as the product of an $m\times r$ matrix $\widehat{X}_{k,s-1}$ and an $r\times m$ matrix $\Gamma$.
The inverse on the right-hand-side exists since $0\leq \sum_{v=1}^N A_v G_k^{v-1}\leq \sum_{v=1}^N A_v G^{v-1}$ and the spectral radius of $\sum_{v=1}^N A_v G^{v-1}$ is strictly than one \cite{bini3}.
Therefore we will concentrate on finding
$\widehat{X}_{k,s-1}$ as the solution to
\begin{eqnarray*}
  \widehat{X}_{k,s-1} &=& \widehat{A}_0+ (\sum_{j=1}^N A_j G_{k,s-1}^{j-1}-I)\widehat{G}_{k,s-1}+\sum_{v=1}^N A_v G_k^{v-1}\widehat{X}_{k,s-1}\\
  & &+\sum_{j=1}^{N-1}\sum_{v=j+1}^{N} A_v G_k^{v-1-j}\widehat{X}_{k,s-1}\Gamma G_k^{j-1}\widehat{G}_{k}\\
   &=&  \widehat{A}_0+(\sum_{j=1}^N A_jG_{k,s-1}^{j-1}-I)\widehat{G}_{k,s-1}+\sum_{j=0}^{N-1}S_{k,j+1}\widehat{X}_{k,s-1}(\Gamma\widehat{G}_{k})^j,
\end{eqnarray*}
which can be rewritten as
\begin{equation}\label{mcitlr}
    (S_{k,1}-I)\widehat{X}_{k,s-1}+\sum_{j=1}^{N-1}S_{k,j+1}\widehat{X}_{k,s-1}(\Gamma\widehat{G}_{k})^j=(I-\sum_{j=1}^N A_jG_{k,s-1}^{j-1})\widehat{G}_{k,s-1}-\widehat{A}_0.
\end{equation}
We can use the Schur decomposition method in \cite{Houdt3,Houdt2} to solve the above equation. Different from the Newton's iteration in \cite{Houdt2}, the special coefficient structure can be reused here, thus saving the overall computational cost. We will repot the numerical performance of the Newton-Shamanskii iteration in Section ?.
\section{The Case of Low-Rank Local and Upward Transitions}
In this section, the case of low-rank local and upward transitions is considered, where the $m\times m $ matrices $\{A_i, 1\leq i \leq N\}$ can be decomposed as $A_i=\Gamma \widehat{A}_i$ with $\Gamma \in \mathbb{R}^{m\times r}$ and $\widehat{A}_i\in \mathbb{R}^{r\times m}$. To exploit low-rank local and upward transitions, we introduce the matrix $U$, which is the generator of the censored Markov chain on level $i$, starting from level $i$, before the first transition on level $i-1$. The following equality holds based on a level crossing argument:
\begin{equation}\label{mcequ}
    U=\sum_{i=1}^N A_i G^{i-1}=  \sum_{i=1}^N A_i ((I-U)^{-1}A_0)^{i-1}.
\end{equation}
For the case of low-rank local and upward transitions, we can rewrite $U$ as
\begin{equation*}
    U= \sum_{i=1}^N A_i ((I-U)^{-1}A_0)^{i-1}=\Gamma[\sum_{i=1}^N \widehat{A}_i ((I-U)^{-1}A_0)^{i-1}]=\Gamma \widehat{U},
\end{equation*}
which means $U$ is of rank $r$, while $G=(I-U)^{-1}A_0$ is generally of rank $m$.

Therefore we find $U$ as the solution to
\begin{equation}\label{mceqf}
    \mathcal{F}(X)=X-\sum_{i=1}^N A_i ((I-X)^{-1}A_0)^{i-1}=0,
\end{equation}
and get $G$ from $G=(I-U)^{-1}A_0$ \cite{Latou2,Houdt2}.
The Newton -Shamanskii iteration step for Eq. \eqref{mceqf} is
as follows:

 for $k=0,1,\cdots$
\begin{eqnarray*}
\nonumber  \mathcal{F}^{'}_{U_k}Y_{k,s-1}&=&-\mathcal{F}({U_{k,s-1}}),\quad U_{k,s} = U_{k,s-1}+Y_{k,s-1}, \quad s=1,2,\cdots , n_k,\\
\nonumber  U_{k+1}&=&U_{k+1,0}= U_{k,n_k}.
\end{eqnarray*}
$Y_{k,s-1}$ is the solution to
\begin{eqnarray}
 \nonumber   Y_{k,s-1}&-&\sum_{i=2}^N A_i\sum_{j=1}^{i-1}((I-U_k)^{-1}A_0)^{j-1}(I-U_k)^{-1}Y_{k,s-1}((I-U_k)^{-1}A_0)^{i-j} \\
 \label{mcitu} &=&\sum_{i=1}^N A_i ((I-U_{k,s-1})^{-1}A_0)^{i-1}-U_{k,s-1}.
\end{eqnarray}
If we define $R_{k,j}=\sum_{i=j+1}^N A_i ((I-U_k)^{-1}A_0)^{i-1-j}(I-U_k)^{-1}$ and rearrange the terms, Eq. \eqref{mcitu} can be rewritten as
\begin{equation*}
    Y_{k,s-1}-\sum_{j=1}^{N-1}R_{k,j} Y_{k,s-1}((I-U_k)^{-1}A_0)^j=\sum_{i=1}^N A_i ((I-U_{k,s-1})^{-1}A_0)^{i-1}-U_{k,s-1},
\end{equation*}
which is of the form $\sum_{j=0}^{N-1} B_jXC^j=E$.
This iteration enables us to exploit low-rank local and upward transitions.
The iterates $U_{k,s}=U_{k,s-1}+Y_{k,s-1}$, where $Y_{k,s-1}$ solves Eq. \eqref{mcitu}, can be rewritten as $ U_{k,s}=\Gamma \widehat{U}_{k,s}$. This can be shown by make induction on the index $s$. It obviously holds for $U_{0,0}$.
Assuming $U_{k,s-1}=\Gamma \widehat{U}_{k,s-1}$, from Eq. \eqref{mcitu} we get
\begin{eqnarray*}
  Y_{k,s-1}&=&\Gamma[\sum_{i=2}^N \widehat{A}_i\sum_{j=1}^{i-1}((I-U_k)^{-1}A_0)^{j-1}(I-U_k)^{-1}Y_{k,s-1}((I-U_k)^{-1}A_0)^{i-j} \\
  &&+\sum_{i=1}^N \widehat{A}_i ((I-U_{k,s-1})^{-1}A_0)^{i-1}-\widehat{U}_{k,s-1}],
\end{eqnarray*}
which tell us that $Y_{k,s-1}$ can be decomposed as $\Gamma \widehat{Y}_{k,s-1}$, and the same holds for $U_{k,s}=U_{k,s-1}+Y_{k,s-1}$.
Therefore from Eq. \eqref{mcitu} we will focus on finding $\widehat{Y}_{k,s-1}$ as the solution to
\begin{eqnarray*}
  \widehat{Y}_{k,s-1}&-&\sum_{i=2}^N \widehat{A}_i\sum_{j=1}^{i-1}((I-U_k)^{-1}A_0)^{j-1}(I-U_k)^{-1}Y_{k,s-1}((I-U_k)^{-1}A_0)^{i-j} \\
  &=&
     \sum_{i=1}^N \widehat{A}_i ((I-U_{k,s-1})^{-1}A_0)^{i-1}-\widehat{U}_{k,s-1}.
\end{eqnarray*}

Defining $\widehat{R}_{k,j}=\sum_{i=j+1}^N \widehat{A}_i ((I-U_k)^{-1}A_0)^{i-1-j}(I-U_k)^{-1}\Gamma$, we can rewrite the above equation  as
\begin{equation}\label{mcitlr2}
    \widehat{Y}_{k,s-1}-\sum_{j=1}^{N-1}\widehat{R}_{k,j} \widehat{Y}_{k,s-1}((I-U_k)^{-1}A_0)^j=\sum_{i=1}^N \widehat{A}_i ((I-U_{k,s-1})^{-1}A_0)^{i-1}-\widehat{U}_{k,s-1},
\end{equation}
which is of the form $\sum_{j=0}^{N-1} B_jXC^j=E$.

\section{Convergence Analysis}

There is monotone convergence when the Newton-Shamanskii method is applied to the Eq.~\eqref{mcequation}.
\subsection{Preliminary}
Let us first recall that a real square matrix $A$ is a $Z$-matrix if
all its off-diagonal elements are nonpositive, and can be
written as $sI-B$ with $B \geq 0$.  Moreover, a $Z$-matrix $A$ is called an $M$-matrix if $s\geq \rho(B)$,
where $\rho(\cdot)$ is the spectral radius; it is a singular $M$-matrix if $s=\rho(B)$, and a
nonsingular $M$-matrix if $s>\rho(B)$.
The following result from Ref.~\cite{varga} is to be exploited.
\begin{lemma}\label{mcyubei1}
For a $Z$-matrix $A$, the following statements are equivalent:
\begin{itemize}
  \item [$(a)$] $A$ is a nonsingular $M$-matrix\;;
  \item [$(b)$] $A^{-1}\geq 0$\;;
  \item [$(c)$] $Av>0$ for some vector $v>0$\;;
  \item [$(d)$] All eigenvalues of $A$ have positive real parts.
\end{itemize}
\end{lemma}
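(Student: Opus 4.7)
The plan is to prove the four conditions equivalent by going around the cycle $(a)\Rightarrow(b)\Rightarrow(c)\Rightarrow(a)$ and separately establishing $(a)\Leftrightarrow(d)$. The unifying tool is the decomposition $A=sI-B$ with $B\geq 0$ furnished by the $Z$-matrix hypothesis, so that every condition can be reformulated in terms of the two scalars $s$ and $\rho(B)$.

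For $(a)\Rightarrow(b)$ I would write $A^{-1}=s^{-1}(I-B/s)^{-1}$ and expand the right-hand side as a Neumann series, which converges because $\rho(B/s)<1$ under the hypothesis $s>\rho(B)$, and consists entirely of nonnegative terms. For $(b)\Rightarrow(c)$ I would simply take $v=A^{-1}e$ with $e$ the all-ones vector: then $Av=e>0$, and $v>0$ because a nonnegative nonsingular matrix has no row identically zero, so each entry of $A^{-1}e$ is strictly positive.

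The central step is $(c)\Rightarrow(a)$. Here I would invoke Perron--Frobenius to extract a nonzero nonnegative left eigenvector $u\geq 0$ of $B$ corresponding to the spectral radius, so that $u^{T}B=\rho(B)u^{T}$. Multiplying the inequality $Av>0$ on the left by $u^{T}$ gives $(s-\rho(B))\,u^{T}v=u^{T}Av>0$, and since $u^{T}v>0$ (because $u$ is nonnegative nonzero and $v$ is strictly positive), it follows that $s>\rho(B)$, i.e.\ $A$ is a nonsingular $M$-matrix. Finally, $(a)\Leftrightarrow(d)$ would follow from the spectral mapping $\sigma(A)=\{s-\lambda:\lambda\in\sigma(B)\}$ together with the facts that $\rho(B)\in\sigma(B)$ (Perron--Frobenius) and $\operatorname{Re}\lambda\leq|\lambda|\leq\rho(B)$ for every $\lambda\in\sigma(B)$: $s>\rho(B)$ is then equivalent to all $s-\lambda$ having positive real part.

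The main obstacle I anticipate is precisely the appeal to a nonnegative left Perron eigenvector in step $(c)\Rightarrow(a)$: if $B$ is irreducible the usual theorem produces $u>0$ directly, but if $B$ is reducible one instead perturbs $B$ to $B+\varepsilon J$ with $J$ the all-ones matrix (which is positive, hence irreducible), extracts the strictly positive left Perron eigenvector of the perturbation, and passes to the limit $\varepsilon\downarrow 0$ to obtain a nonzero nonnegative eigenvector of $B$ at the eigenvalue $\rho(B)$. With that one ingredient in place, everything else reduces to routine bookkeeping around $A=sI-B$, and the cycle closes.
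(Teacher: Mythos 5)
The paper does not actually prove this lemma: it cites it verbatim as a known fact from Varga's \emph{Matrix Iterative Analysis} (1962), so there is no in-paper argument to compare your proof against. That said, your proof is correct and is essentially the standard textbook argument for this classical characterization of nonsingular $M$-matrices. The cycle $(a)\Rightarrow(b)$ via the Neumann series, $(b)\Rightarrow(c)$ via $v=A^{-1}e$ with the no-zero-row observation, and $(c)\Rightarrow(a)$ by pairing $Av>0$ against a left Perron eigenvector of $B$ all go through cleanly; the identity $u^{T}Av=(s-\rho(B))\,u^{T}v$ combined with $u^{T}v>0$ and $u^{T}(Av)>0$ is exactly the right leverage. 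Your handling of the reducible case by perturbing to $B+\varepsilon J$ and passing to the limit is a legitimate way to secure a nonzero nonnegative left eigenvector at $\rho(B)$, and the $(a)\Leftrightarrow(d)$ equivalence via $\sigma(A)=\{s-\lambda:\lambda\in\sigma(B)\}$ and $\operatorname{Re}\lambda\le\rho(B)$ is standard. In short, the argument is sound; it just supplies a proof where the paper contents itself with a citation.
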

\noindent The following result is also well known~\cite{varga}.
\begin{lemma}\label{mcyubei2}
Let $A$ be a nonsingular $M$-matrix. If $B \geq A$ is a $Z$-matrix, then $B$ is a nonsingular $M$-matrix.  Moreover, $B^{-1}\leq A^{-1}$.
\end{lemma}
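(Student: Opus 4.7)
The plan is to leverage the characterizations of nonsingular $M$-matrices established in Lemma \ref{mcyubei1}, specifically the equivalence between $(a)$ and $(c)$. Both parts of the statement are standard facts in the theory of $M$-matrices, so the proof should reduce to short, direct calculations once the right characterization is chosen.

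For the first assertion, I would apply Lemma \ref{mcyubei1}(c) to $A$ to obtain a vector $v > 0$ satisfying $Av > 0$. Because $B - A \geq 0$ entrywise and $v > 0$, the product $(B - A)v$ is a nonnegative vector, so
\begin{equation*}
Bv = Av + (B - A)v \geq Av > 0.
\end{equation*}
Since $B$ is assumed to be a $Z$-matrix and admits such a positive witness vector, the direction $(c) \Rightarrow (a)$ of Lemma \ref{mcyubei1} gives that $B$ is a nonsingular $M$-matrix.

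For the inequality $B^{-1} \leq A^{-1}$, I would use the algebraic identity
\begin{equation*}
A^{-1} - B^{-1} = A^{-1}(B - A)B^{-1},
\end{equation*}
which follows immediately upon expanding the right-hand side as $A^{-1}BB^{-1} - A^{-1}AB^{-1}$. By part $(b)$ of Lemma \ref{mcyubei1} applied to the nonsingular $M$-matrices $A$ and $B$, both $A^{-1} \geq 0$ and $B^{-1} \geq 0$, and by hypothesis $B - A \geq 0$. Since the product of three entrywise nonnegative matrices is entrywise nonnegative, it follows that $A^{-1} - B^{-1} \geq 0$.

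There is no serious obstacle here; the only point requiring care is the choice of which characterization from Lemma \ref{mcyubei1} to invoke at each stage. Using $(b)$ alone does not allow one to pass from $B \geq A$ to any statement about $B^{-1}$, so for the first part it is essential to route the argument through the positive witness $v$ in part $(c)$, which is precisely what makes the entrywise inequality $B \geq A$ transfer cleanly. Once $B$ has been shown to be a nonsingular $M$-matrix, part $(b)$ becomes available for $B$ as well, which is what legitimizes the factorization identity used in the second step.
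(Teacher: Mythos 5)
Your proof is correct and complete, and it is the standard textbook argument. The paper itself does not prove this lemma; it simply cites it as a well-known result from Varga's \emph{Matrix Iterative Analysis}, so there is no paper proof to compare against. Both halves of your argument are sound: routing the first claim through the positive witness $v$ in Lemma~\ref{mcyubei1}(c) is exactly the right move, since the entrywise inequality $B \geq A$ combined with $v > 0$ gives $Bv \geq Av > 0$ immediately, and the second-resolvent identity $A^{-1} - B^{-1} = A^{-1}(B-A)B^{-1}$ together with nonnegativity of all three factors (via Lemma~\ref{mcyubei1}(b), which is now applicable to $B$) gives $B^{-1} \leq A^{-1}$ cleanly.
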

\noindent The minimal nonnegative solution $S$ for the Eq.~\eqref{mcequation} may also be recalled --- cf. Ref.~\cite{Latou} for details.
\begin{theorem}\label{mcthm0}
If the rate $\rho$ defined by Eq.~\eqref{mcrho} satisfies $\rho \neq1$, then the matrix
\begin{equation*}
   I- \sum_{v=1}^N\sum_{j=0}^{v-1} (G^{v-1-j})^T \otimes A_vG^j
\end{equation*}
is a nonsingular $M$-matrix.
\end{theorem}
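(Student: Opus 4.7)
The plan is to verify that $M := I - H$, where
\[
H := \sum_{v=1}^{N}\sum_{j=0}^{v-1} (G^{v-1-j})^T \otimes A_v G^j,
\]
is a nonsingular $M$-matrix by checking the $Z$-matrix property and then condition~$(c)$ of Lemma~\ref{mcyubei1}. First, since $G\ge 0$ and each $A_v\ge 0$, every Kronecker summand in $H$ is entrywise nonnegative, so $H\ge 0$ and $M=I-H$ has nonpositive off-diagonals, i.e.\ is a $Z$-matrix written in the form $sI-B$ with $s=1$ and $B=H\ge 0$.

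To promote $Z$-matrix to nonsingular $M$-matrix, I would verify Lemma~\ref{mcyubei1}$(c)$ by producing a vector $\eta>0$ with $M\eta>0$. Using $\mathrm{vec}(AZB) = (B^T\otimes A)\mathrm{vec}(Z)$, this is equivalent to exhibiting a positive matrix $Z\in\mathbb{R}^{m\times m}$ with
\[
Z \;>\; \mathcal{L}_G(Z) := \sum_{v=1}^{N}\sum_{j=0}^{v-1} A_v G^j Z G^{v-1-j} \quad\text{entrywise}.
\]
The plan is to use the spectral bound $\rho(U)<1$ for $U:=\sum_{v=1}^N A_v G^{v-1}$, cited in the paper from~\cite{bini3}, together with Perron--Frobenius applied to both $U$ and $U^T$: this yields strictly positive $x,w$ and some $\alpha<1$ with $Ux\le\alpha x$ and $w^T U\le\alpha w^T$. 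I would try the rank-one ansatz $Z = xw^T$ (perturbed by a small multiple of $ee^T$ if needed to force strict inequality); the factorization $A_vG^j(xw^T)G^{v-1-j} = (A_vG^jx)(w^TG^{v-1-j})$ reduces the double sum to products of one-sided quantities that can then be bounded using the spectral inequalities on $U$, combined with $Ge=e$ when $\rho<1$ or $\rho(G)<1$ when $\rho>1$.

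The main obstacle is bridging the gap between $\rho(U)<1$, which controls only the aggregate $\sum_v A_v G^{v-1}$, and the operator $\mathcal{L}_G$, which decouples powers of $G$ applied on the two sides of $Z$. The rank-one ansatz separates the two sides, but the subsequent one-sided estimates require different arguments in the regimes $\rho<1$ and $\rho>1$: in the stochastic case $Ge=e$ stabilises right-multiplications by $G^{v-1-j}$, while in the transient case those multiplications shrink geometrically on their own. The assumption $\rho\neq 1$ is essential, as $\rho=1$ is precisely the borderline where $\rho(H)=1$ and $M$ degenerates to a singular $M$-matrix.
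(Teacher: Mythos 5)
The paper does not prove Theorem~\ref{mcthm0} at all: it is stated as a recalled fact with a citation to Latouche~\cite{Latou} (and the surrounding material to~\cite{bini3}), so there is no in-paper argument to compare your sketch against. Judged on its own, your opening steps are fine: $H\ge0$ so $M=I-H$ is a $Z$-matrix, and by Lemma~\ref{mcyubei1}(c) together with the identity $\mathrm{vec}(AZB)=(B^T\otimes A)\mathrm{vec}(Z)$ it suffices to exhibit an entrywise positive $Z$ with $Z>\mathcal{L}_G(Z)$. But the step from $\rho(U)<1$ to such a $Z$ is a genuine gap, and the rank-one ansatz does not close it.

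Concretely, $\mathcal{L}_G(xw^T)=\sum_{v=1}^N\sum_{j=0}^{v-1}(A_vG^jx)(w^TG^{v-1-j})$, and your Perron-type bounds $Ux\le\alpha x$, $w^TU\le\alpha w^T$ only control the single collapsed sums $\sum_v A_vG^{v-1}x$ and $w^T\sum_v A_vG^{v-1}$; they say nothing about the cross terms with unequal powers of $G$ on the two sides, which is precisely where the operator $\mathcal{L}_G$ differs from $U$. Moreover, in the positive recurrent regime $\rho<1$ the matrix $G$ is stochastic, so $\rho(G)=1$ and no strictly positive $w$ can satisfy $w^TG<w^T$ (summing the entries of $w^TG$ against $e$ gives equality); the ``one-sided estimates'' on the right factor that you propose simply do not exist in that regime, and appealing to $Ge=e$ pins down the left factor instead, which again does not bound the double sum. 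You have correctly located the obstacle --- bridging $\rho(U)<1$ to $\rho(\mathcal{L}_G)<1$ --- but you have not overcome it, and this is exactly the nontrivial content of the theorem. The actual arguments in the literature (Latouche~\cite{Latou}, Bini--Latouche--Meini~\cite{bini3}) proceed via the spectral structure of the matrix polynomial $zI-\sum_v A_vz^v$ and the drift dichotomy on the roots inside/on/outside the unit circle, which is a qualitatively different mechanism than a Perron--Frobenius estimate on $U$ alone. As written, the proposal is an incomplete plan rather than a proof.
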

\subsection{Monotone convergence}

The following lemma displays the monotone convergence properties of the Newton iteration for the Eq.~\eqref{mcequation}.
\begin{lemma}\label{mclemm1}
Consider a matrix $X$ such that
\begin{itemize}
  \item [(i)] $\mathcal{G}(X)\geq 0$\;,
  \item [(ii)] $0\leq X\leq G$\;,
  \item [(iii)]$I- \sum_{v=1}^N\sum_{j=0}^{v-1} (X^{v-1-j})^T \otimes A_vX^j$ is a nonsingular $M$-matrix\;.
\end{itemize}
Then the matrix
\begin{equation}\label{mczheng1}
    Y=X-(\mathcal{G}^{'}_{X})^{-1}\mathcal{G}(X)
\end{equation}
is well defined, and
\begin{itemize}
  \item [(a)] $\mathcal{G}(Y)\geq 0$\;,
  \item [(b)] $0\leq X \leq Y\leq G$\;,
  \item [(c)]$I- \sum_{v=1}^N\sum_{j=0}^{v-1} (Y^{v-1-j})^T \otimes A_vY^j$ is a nonsingular $M$-matrix\;.
\end{itemize}
\end{lemma}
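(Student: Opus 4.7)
The plan is to translate hypothesis (iii) through vectorisation into an invertibility statement for $\mathcal{G}'_X$ with a nonnegativity-preserving inverse, and then to extract (a) and (b) from two instances of a Taylor-style identity whose remainder is manifestly a nonnegative sum of products of nonnegative matrices; (c) will then follow quickly from Theorem \ref{mcthm0} and Lemma \ref{mcyubei2}. Set $M_X := I-\sum_{v=1}^N\sum_{j=0}^{v-1}(X^{v-1-j})^T\otimes A_vX^j$, so that by \eqref{mcfdao} one has $\mathrm{vec}(-\mathcal{G}'_X(Z))=M_X\,\mathrm{vec}(Z)$. Hypothesis (iii) together with Lemma \ref{mcyubei1} yields $M_X^{-1}\geq 0$, so $\mathcal{G}'_X$ is invertible and $(-\mathcal{G}'_X)^{-1}$ maps nonnegative matrices to nonnegative matrices; hence $Y = X+(-\mathcal{G}'_X)^{-1}\mathcal{G}(X)$ is well defined, and (i) forces $Y-X\geq 0$, which is the lower half of (b).

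For the upper half of (b) I would apply the telescoping polynomial identity $Z^n-W^n=\sum_{k=0}^{n-1}W^k(Z-W)Z^{n-1-k}$ with $W=X$ and $Z=G$ to obtain
\[
\mathcal{G}(G)-\mathcal{G}(X)-\mathcal{G}'_X(G-X)=\sum_{v=2}^N A_v\sum_{j=0}^{v-2}X^j(G-X)\bigl(G^{v-1-j}-X^{v-1-j}\bigr).
\]
Since $\mathcal{G}(G)=0$, $A_v\geq 0$, $X\geq 0$, and $0\leq X\leq G$ forces $G^{v-1-j}-X^{v-1-j}\geq 0$ in every surviving term, the right-hand side is nonnegative, so $-\mathcal{G}'_X(G-X)\geq \mathcal{G}(X)\geq 0$. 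Applying the nonnegativity-preserving operator $(-\mathcal{G}'_X)^{-1}$ then yields $G-X\geq Y-X$, i.e.\ $Y\leq G$.

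For (a), I would rerun the same expansion with $Z=Y$ and $W=X$:
\[
\mathcal{G}(Y)=\mathcal{G}(X)+\mathcal{G}'_X(Y-X)+\sum_{v=2}^N A_v\sum_{j=0}^{v-2}X^j(Y-X)\bigl(Y^{v-1-j}-X^{v-1-j}\bigr).
\]
The defining relation $\mathcal{G}'_X(Y-X)=-\mathcal{G}(X)$ cancels the first two terms, and $0\leq X\leq Y$ from (b) makes every summand of the remainder nonnegative, giving $\mathcal{G}(Y)\geq 0$. For (c), Theorem \ref{mcthm0} asserts that $M_G$ is a nonsingular $M$-matrix; since $0\leq Y\leq G$ forces $M_Y\geq M_G$ entrywise and $M_Y$ is plainly a $Z$-matrix, Lemma \ref{mcyubei2} promotes $M_Y$ to a nonsingular $M$-matrix, which is exactly (c).

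The only step that requires genuine care is producing the two displayed identities in a form that exposes the remainder as a nonnegative sum of products of nonnegative matrices; once the telescoping identity $Z^n-W^n=\sum_kW^k(Z-W)Z^{n-1-k}$ is in hand the computation is mechanical, and every inequality after that is a direct application of Lemmas \ref{mcyubei1}--\ref{mcyubei2}.
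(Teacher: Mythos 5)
Your proof is correct and follows essentially the same route as the paper: well-definedness and $Y\geq X$ from hypothesis (iii) and $\mathcal{G}(X)\geq 0$ via vectorisation and Lemma \ref{mcyubei1}; $Y\leq G$ and $\mathcal{G}(Y)\geq 0$ from a Taylor-type expansion at $X$ whose remainder is a nonnegative sum of products of nonnegative matrices; and (c) from Theorem \ref{mcthm0} together with Lemma \ref{mcyubei2}. The one welcome refinement is that you write the remainder explicitly as the telescoping sum $\sum_{v\geq 2}A_v\sum_{j=0}^{v-2}W^j(Z-W)\bigl(Z^{v-1-j}-W^{v-1-j}\bigr)$, whereas the paper invokes a Lagrange mean-value form $\tfrac12\mathcal{G}''_X(\theta(Y-X),\theta(Y-X))$, which for a matrix-valued polynomial of degree $N>2$ is only valid componentwise with component-dependent $\theta$; your identity makes the sign of the remainder transparent without that caveat.
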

\begin{proof}
$\mathcal{G}^{'}_{X}$ is invertible and the matrix $Y$ is well defined, from (iii) and Lemma \ref{mcyubei1}.
Since $$[I- \sum_{v=1}^N\sum_{j=0}^{v-1} (X^{v-1-j})^T \otimes A_vX^j]^{-1}\geq0$$ from (iii) and Lemma \ref{mcyubei1} and $\mathcal{G}(X)\geq 0$, we get that $vec(Y)\geq vec(X)$ and thus $Y\geq X$.
From Eq.~(\ref{mczheng1}) and the Taylor formula, there exists a number $\theta$, $0<\theta_1<1$, such that
   \begin{eqnarray*} \mathcal{G}(Y)&=&\mathcal{G}(X)+\mathcal{G}^{'}_{X}(Y-X)+\frac{1}{2}\mathcal{G}^{''}_X(\theta_1(Y-X),\theta_1(Y-X))\\
       \nonumber &=&\frac{1}{2}\mathcal{G}^{''}_X(\theta_1(Y-X),\theta_1(Y-X))  \\
      \nonumber & \geq &0\;,
     \end{eqnarray*}
so (a) is proven. (b) may be proven as follows.  From
\begin{equation}\label{mctem1}
  0=\mathcal{G}(G)=\mathcal{G}(X)+\mathcal{G}^{'}_{X}(G-X)+\frac{1}{2}\mathcal{G}^{''}_X(\theta_2(G-X),\theta_2(G-X)),
   \end{equation}
  where $0<\theta_2<1$,
we have
 \begin{eqnarray*}
   \nonumber   -\mathcal{G}^{'}_{X}(G-Y)&=&\mathcal{G}^{'}_{X}(Y-X)-\mathcal{G}^{'}_{X}(G-X) \\
      \nonumber &=& -\mathcal{G}(X)-\mathcal{G}^{'}_{X}(G-X)\\
      \nonumber &=&\frac{1}{2}\mathcal{G}^{''}_X(\theta_2(G-X),\theta_2(G-X))\;\\
      \nonumber &\geq& 0,
     \end{eqnarray*}
     where the last inequality is from $G-X \geq 0$ by (ii).
It is notable that $$I- \sum_{v=1}^N\sum_{j=0}^{v-1} (X^{v-1-j})^T \otimes A_vX^j$$ is a nonsingular $M$-matrix, so $vec(G-Y)\geq 0$ from Lemma \ref{mcyubei1} --- i.e. $G-Y\geq 0$. Now $Y\geq X$, so (b) follows. Next we prove (c).
 Since $0\leq Y \leq G$, $$
I- \sum_{v=1}^N\sum_{j=0}^{v-1} (Y^{v-1-j})^T \otimes A_vY^j
\geq
   I- \sum_{v=1}^N\sum_{j=0}^{v-1} (G^{v-1-j})^T \otimes A_vG^j
\;,$$ and $I- \sum_{v=1}^N\sum_{j=0}^{v-1} (G^{v-1-j})^T
 \otimes A_vG^j$ is a nonsingular $M$-matrix.  Consequently from Lemma \ref{mcyubei2},
 $I- \sum_{v=1}^N\sum_{j=0}^{v-1} (Y^{v-1-j})^T \otimes A_vY^j$ is a nonsingular $M$-matrix.
\end{proof}

A generalization of Lemma \ref{mclemm1} provides the theoretical basis for the monotone convergence of the Newton-Shamanskii method for the Eq.~\eqref{mcequation}.
\begin{lemma}\label{mclemm2}
Consider a matrix $X$ such that
\begin{itemize}
  \item [(i)] $\mathcal{G}(X)\geq 0$\;,
  \item [(ii)] $0\leq X\leq G$\;,
  \item [(iii)]$I- \sum_{v=1}^N\sum_{j=0}^{v-1} (X^{v-1-j})^T \otimes A_vX^j$ is a nonsingular $M$-matrix\;.
\end{itemize}
Then for any matrix $Z$ where $0\leq Z\leq X$, the matrix
\begin{equation}\label{qzheng2}
    Y=X-(\mathcal{G}^{'}_{Z})^{-1}\mathcal{G}(X)
\end{equation}
exists such that
\begin{itemize}
  \item [(a)] $\mathcal{G}(Y)\geq 0$\;,
  \item [(b)] $0\leq X\leq Y\leq G$\;,
  \item [(c)]$I- \sum_{v=1}^N\sum_{j=0}^{v-1} (Y^{v-1-j})^T \otimes A_vY^j$ is a nonsingular $M$-matrix\;.
\end{itemize}
\end{lemma}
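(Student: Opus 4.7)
The plan is to adapt the argument of Lemma~\ref{mclemm1} almost verbatim, the only new ingredient being that the Fr\'{e}chet derivative is evaluated at $Z$ rather than at $X$. I will first verify that $\mathcal{G}^{'}_Z$ is invertible with a nonnegative inverse (so that $Y$ is well defined and the easy half of (b) is free), then carry over the Taylor expansion argument to obtain (a) and the upper bound $Y\le G$ in (b), and finally reuse the same $M$-matrix monotonicity argument from Lemma~\ref{mclemm1} for (c).

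For the well-definedness of $Y$, write $M_W:=I-\sum_{v=1}^N\sum_{j=0}^{v-1}(W^{v-1-j})^T\otimes A_vW^j$, so that $\operatorname{vec}(\mathcal{G}^{'}_W(H))=-M_W\operatorname{vec}(H)$. Since $0\le Z\le X$, the entrywise inequality $M_Z\ge M_X$ holds, both matrices are $Z$-matrices, and by hypothesis (iii) $M_X$ is a nonsingular $M$-matrix; Lemma~\ref{mcyubei2} then yields that $M_Z$ is a nonsingular $M$-matrix with $0\le M_Z^{-1}\le M_X^{-1}$. Hence $\mathcal{G}^{'}_Z$ is invertible and $-(\mathcal{G}^{'}_Z)^{-1}$ sends nonnegative matrices to nonnegative matrices; applied to $\mathcal{G}(X)\ge 0$ this gives $Y-X\ge 0$, which is the easy half of (b).

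The main new step, and the only place I expect any genuine difficulty, is to control the ``mismatch'' between $\mathcal{G}^{'}_X$ and $\mathcal{G}^{'}_Z$. From \eqref{mcfdao},
\begin{equation*}
\mathcal{G}^{'}_X(H)-\mathcal{G}^{'}_Z(H)=\sum_{v=1}^N\sum_{j=0}^{v-1}A_v\bigl(X^jHX^{v-1-j}-Z^jHZ^{v-1-j}\bigr),
\end{equation*}
and since $X\ge Z\ge 0$ while $A_v$ and $H$ are nonnegative, each summand is nonnegative. Thus $\mathcal{G}^{'}_X(H)\ge \mathcal{G}^{'}_Z(H)$ whenever $H\ge 0$. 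With this in hand, (a) follows as in Lemma~\ref{mclemm1}: Taylor expansion of $\mathcal{G}$ around $X$ at $Y$ gives
\begin{equation*}
\mathcal{G}(Y)=\mathcal{G}(X)+\mathcal{G}^{'}_X(Y-X)+\tfrac{1}{2}\mathcal{G}^{''}_X(\theta_1(Y-X),\theta_1(Y-X)),
\end{equation*}
and combining this with $\mathcal{G}(X)+\mathcal{G}^{'}_Z(Y-X)=0$, which is \eqref{qzheng2} rewritten, gives
\begin{equation*}
\mathcal{G}(Y)=\bigl[\mathcal{G}^{'}_X(Y-X)-\mathcal{G}^{'}_Z(Y-X)\bigr]+\tfrac{1}{2}\mathcal{G}^{''}_X(\theta_1(Y-X),\theta_1(Y-X))\ge 0,
\end{equation*}
by the mismatch estimate with $H=Y-X\ge 0$ together with the nonnegativity of the second-derivative term, which is evident from \eqref{mc2fdao} since $X\ge 0$, $Y-X\ge 0$, and each $A_v\ge 0$.

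The upper bound $Y\le G$ in (b) proceeds analogously: expand $0=\mathcal{G}(G)$ by Taylor at $X$ exactly as in \eqref{mctem1}, then use \eqref{qzheng2} to rewrite $\mathcal{G}^{'}_Z(G-Y)=\mathcal{G}^{'}_Z(G-X)+\mathcal{G}(X)$ and substitute the Taylor identity for $\mathcal{G}(X)$. The outcome expresses $-\mathcal{G}^{'}_Z(G-Y)$ as $[\mathcal{G}^{'}_X(G-X)-\mathcal{G}^{'}_Z(G-X)]+\tfrac{1}{2}\mathcal{G}^{''}_X(\theta_2(G-X),\theta_2(G-X))$, which is nonnegative by the mismatch estimate applied to $H=G-X\ge 0$ and by the sign of the second-derivative term; multiplying by $M_Z^{-1}\ge 0$ gives $G-Y\ge 0$. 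Finally (c) is immediate: since $0\le Y\le G$ by (b), the $Z$-matrix $M_Y$ dominates $M_G$, which is a nonsingular $M$-matrix by Theorem~\ref{mcthm0}, so Lemma~\ref{mcyubei2} shows that $M_Y$ is also a nonsingular $M$-matrix.
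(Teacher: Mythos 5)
Your proof is correct, and it reaches the same conclusions by a somewhat different route. The paper's proof introduces the auxiliary Newton iterate $\hat{Y}=X-(\mathcal{G}^{'}_{X})^{-1}\mathcal{G}(X)$, observes $Y\le\hat{Y}$ from $M_Z^{-1}\le M_X^{-1}$ and $\mathcal G(X)\ge 0$, and then simply quotes Lemma~\ref{mclemm1} applied to $\hat{Y}$ to obtain both the upper bound $Y\le\hat{Y}\le G$ in (b) and the $M$-matrix conclusion (c) (via $M_Y\ge M_{\hat{Y}}$). You instead re-derive $Y\le G$ by a fresh Taylor expansion of $0=\mathcal G(G)$ at $X$ and deduce (c) by comparing $M_Y$ directly with $M_G$ via Theorem~\ref{mcthm0}; both paths are valid, with the paper's being a little more economical because it piggybacks on Lemma~\ref{mclemm1}, and yours being a little more self-contained. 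The other genuine difference is in the treatment of the ``mismatch'': for part (a) the paper expresses $(\mathcal{G}^{'}_{X}-\mathcal{G}^{'}_{Z})(Y-X)$ as a second-derivative mean-value term $\mathcal{G}^{''}_Z\bigl((Y-X),\theta_4(X-Z)\bigr)$ and then appeals to the sign of $\mathcal{G}^{''}$, whereas you establish the operator monotonicity $\mathcal{G}^{'}_X(H)\ge\mathcal{G}^{'}_Z(H)$ for $H\ge 0$ directly from the formula \eqref{mcfdao}, using $X\ge Z\ge 0$ and nonnegativity of $A_v$. Your version avoids invoking a mean-value representation and is arguably cleaner, since it only needs the elementary fact that $0\le Z\le X$ implies $Z^j\le X^j$ entrywise.

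One small caveat, inherited from the paper's own notation rather than introduced by you: the Lagrange remainder in the Taylor expansion should really be $\tfrac{1}{2}\mathcal{G}^{''}_{\xi}(Y-X,Y-X)$ with $\xi=X+\theta(Y-X)$ an intermediate point, not $\mathcal{G}^{''}_X$ applied to scaled increments. Since the argument only uses that the second derivative at any nonnegative $\xi$, applied to a pair of nonnegative matrices, is itself nonnegative (which follows from \eqref{mc2fdao} exactly as you observe), the conclusion is unaffected; but you may want to write the remainder at the intermediate point to be precise.
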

\begin{proof}
Since $0\leq Z\leq X$, $$
I- \sum_{v=1}^N\sum_{j=0}^{v-1} (Z^{v-1-j})^T \otimes A_vZ^j
\geq
   I- \sum_{v=1}^N\sum_{j=0}^{v-1} (X^{v-1-j})^T \otimes A_vX^j.
\;$$ From (iii) and Lemma \ref{mcyubei2}, $\mathcal{G}^{'}_{Z}$ is invertible and the matrix $Y$ is well defined such that $0\leq X \leq Y.\;$   Let
\begin{equation*}
    \hat{Y}=X-(\mathcal{G}^{'}_{X})^{-1}\mathcal{G}(X)\;,
\end{equation*}
such that $\hat{Y} \geq Y$ from Lemma \ref{mcyubei2}.  As also $\hat{Y}\leq G$ from Lemma \ref{mclemm1}, (b) follows.  Now $$I- \sum_{v=1}^N\sum_{j=0}^{v-1} (\hat{Y}^{v-1-j})^T \otimes A_v\hat{Y}^j$$is a nonsingular $M$-matrix from Lemma \ref{mclemm1} and $\hat{Y} \geq Y$, therefore $I- \sum_{v=1}^N\sum_{j=0}^{v-1} (Y^{v-1-j})^T \otimes A_vY^j $ is a nonsingular $M$-matrix from Lemma \ref{mcyubei2}. Next we show (a) is true. From the Taylor formula, there exists two numbers $\theta_3$ and $\theta_4$, where $0<\theta_3,\theta_4<1$, such that
\begin{eqnarray*} \mathcal{G}(Y)&=&\mathcal{G}(X)+\mathcal{G}^{'}_{X}(Y-X)+\frac{1}{2}\mathcal{G}^{''}_X(\theta_3(Y-X),\theta_3(Y-X))\\
 \nonumber &=&\mathcal{G}(X)+\mathcal{G}^{'}_{Z}(Y-X)+(\mathcal{G}^{'}_{X}-\mathcal{G}^{'}_{Z})(Y-X)+\frac{1}{2}\mathcal{G}^{''}_X(\theta_3(Y-X),\theta_3(Y-X))\\
       \nonumber &=&\mathcal{G}^{''}_Z((Y-X),\theta_4(X-Z))+\frac{1}{2}\mathcal{G}^{''}_X(\theta_3(Y-X),\theta_3(Y-X))  \\
      \nonumber & \geq &0\;,
     \end{eqnarray*}
     where the lat inequality holds since $X-Z\geq 0$ and $Y-X\geq 0.$
\end{proof}
\noindent The monotone convergence result for the Newton-Shamanskii method applied to the Eq.~\eqref{mcequation} follows.
\begin{theorem}\label{mcthm1}
Suppose that a matrix $G_0$ is such that
\begin{itemize}
  \item [(i)] $\mathcal{G}(G_0)\geq 0$\;,
  \item [(ii)] $0\leq G_0\leq G$\;,
  \item [(iii)]$I- \sum_{v=1}^N\sum_{j=0}^{v-1} (G_0^{v-1-j})^T \otimes A_vG_0^j$ is a nonsingular $M$-matrix\,.
\end{itemize}
Then the Newton-Shamanskii algorithm (\ref{mclikea})--(\ref{mclikeb}) generates a sequence $\{G_k\}$
such that $G_k \leq G_{k+1}\leq G$ for all $k \geq 0\,$, and $ \lim_{k \to \infty} G_k=G$.
\end{theorem}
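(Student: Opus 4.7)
The plan is to iterate Lemma \ref{mclemm2} through a nested induction on the outer index $k$ and the inner index $s$, then pass to the limit. For the outer induction I would maintain the invariant that $G_k$ satisfies conditions (i)--(iii). The base case $k=0$ is exactly the hypothesis of the theorem. For the inductive step, assuming $G_k$ satisfies (i)--(iii), I would run a secondary induction on $s = 1, \ldots, n_k$ to prove that each inner iterate $G_{k,s}$ lies in the order interval $[G_{k,s-1}, G]$ and itself satisfies (i)--(iii). The key observation is that the Newton--Shamanskii update
\begin{equation*}
G_{k,s} = G_{k,s-1} - (\mathcal{G}^{'}_{G_k})^{-1}\mathcal{G}(G_{k,s-1})
\end{equation*}
is precisely of the form (\ref{qzheng2}) with $X = G_{k,s-1}$ and $Z = G_{k,0} = G_k$. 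The secondary induction hypothesis provides $G_k \leq G_{k,s-1}$, so $0 \leq Z \leq X$, and Lemma \ref{mclemm2} applies directly. Setting $G_{k+1} = G_{k,n_k}$ then closes the outer step and in particular yields the monotonicity $G_k \leq G_{k+1} \leq G$.

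The sequence $\{G_k\}$ is thus entrywise nondecreasing and bounded above by $G$, so it converges entrywise to some $G^* \in [0, G]$. To identify $G^* = G$, I would exploit the $s = 1$ part of the inner inequality, namely $G_{k+1} \geq G_{k,1}$, which rearranges to
\begin{equation*}
G_{k+1} - G_k \;\geq\; -(\mathcal{G}^{'}_{G_k})^{-1}\mathcal{G}(G_k) \;\geq\; 0,
\end{equation*}
where the right inequality comes from $\mathcal{G}(G_k)\geq 0$ (condition (i)) together with the fact that $-(\mathcal{G}^{'}_{G_k})^{-1}$ corresponds (after vectorization) to the inverse of a nonsingular $M$-matrix and is therefore entrywise nonnegative by Lemma \ref{mcyubei1}. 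The left side tends to zero as $k\to\infty$, so by continuity of $\mathcal{G}$ and of matrix inversion, the right side also tends to zero. Since condition (iii) persists at $G^*$ --- the vectorized operator $I-\sum_{v,j}((G^*)^{v-1-j})^T\otimes A_v(G^*)^j$ is a $Z$-matrix that dominates the analogous operator at $G$, which is a nonsingular $M$-matrix by Theorem \ref{mcthm0}, and hence is itself a nonsingular $M$-matrix by Lemma \ref{mcyubei2} --- the inverse $(\mathcal{G}^{'}_{G^*})^{-1}$ exists, is nonpositive entrywise, and injective. Together with $\mathcal{G}(G^*)\geq 0$ (passing to the limit in (i)) this forces $\mathcal{G}(G^*) = 0$, and then $G^* \leq G$ combined with minimality of $G$ gives $G^* = G$.

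The main obstacle I anticipate is the nested induction bookkeeping, specifically keeping straight that the Fr\'{e}chet derivative in the Newton--Shamanskii step is frozen at $G_k = G_{k,0}$ rather than updated to the current iterate $G_{k,s-1}$, so that the right invocation of Lemma \ref{mclemm2} is with $Z = G_k$ and $X = G_{k,s-1}$, and verifying at each inner step that $Z \leq X$ as required. A subsidiary issue is confirming the survival of condition (iii) in the limit, which is not automatic from pointwise convergence alone but falls out cleanly from the comparison with $G$ via Lemma \ref{mcyubei2}; without this, the sign analysis on $(\mathcal{G}^{'}_{G^*})^{-1}\mathcal{G}(G^*)$ that closes the convergence proof would not be available.
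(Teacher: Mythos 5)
Your proof is correct and follows essentially the same route as the paper: a nested induction invoking Lemma \ref{mclemm2} with $Z=G_k$ frozen and $X=G_{k,s-1}$ advancing, followed by passing to the limit in $G_{i+1}-G_i \geq -(\mathcal{G}'_{G_i})^{-1}\mathcal{G}(G_i)\geq 0$ to force $\mathcal{G}(G_*)=0$ and then invoking minimality of $G$. You supply slightly more detail than the paper does on why condition (iii) survives in the limit (via comparison with the $M$-matrix at $G$ from Theorem \ref{mcthm0} and Lemma \ref{mcyubei2}), which is a welcome clarification of a step the paper leaves implicit.
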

\begin{proof}
The proof is by mathematical induction. From Lemma \ref{mclemm2},
\begin{equation*}
    G_0= G_{0,0}\leq \cdots \leq G_{0,n_0}=G_1 \leq G\;,
\end{equation*}
\begin{equation*}
    \mathcal{G}(G_1)\geq 0\;,
\end{equation*}
and
\begin{equation*}
    I- \sum_{v=1}^N\sum_{j=0}^{v-1} (G_1^{v-1-j})^T \otimes A_v G_1^j
\end{equation*}
is a nonsingular $M$-matrix.
Assuming
\begin{equation*}
    \mathcal{G}(G_i)\geq 0\;,
\end{equation*}
\begin{equation*}
    G_0=G_{0,0}\leq \cdots \leq G_{0,n_0}=G_1 \leq \cdots \leq G_{i-1,n_{i-1}}=G_{i} \leq G\;,
\end{equation*}
and that
$I- \sum_{v=1}^N\sum_{j=0}^{v-1} (G_i^{v-1-j})^T \otimes A_vX_i^j$ is a nonsingular $M$-matrix,
from Lemma \ref{mclemm2}
\begin{equation*}
    \mathcal{G}(G_{i+1})\geq 0\;,
\end{equation*}
\begin{equation*}
    G_i = G_{i,0}\leq \cdots \leq G_{i,n_i}=G_{i+1}\leq G\;,
\end{equation*}
and $I- \sum_{v=1}^N\sum_{j=0}^{v-1} (G_{i+1}^{v-1-j})^T \otimes A_vG_{i+1}^j$ is a nonsingular $M$-matrix.
By induction, the sequence $\{G_k\}$ is therefore monotonically increasing and bounded above by $G$, and so has a limit $G_*$ such that $G_* \leq G$.
Letting $i\rightarrow \infty$ in $G_{i+1}\geq G_{i,1}=G_i-(\mathcal{G}^{'}_{G_i})^{-1}\mathcal{G}({G_i})\geq 0$, it follows that $\mathcal{G}({G_*})=0$.
Consequently, $G_*=G$ since $G_*\leq G$ and $G$ is the minimal nonnegative solution of Eq.~\eqref{mcequation}.
\end{proof}

\section{Numerical Experiments}

So, while more iterations will be needed than for Newton's method, the overall cost of the fast Newton-Shamanskii iteration will be much less.

\section{Conclusions}

\end{document}